\newtheorem{theorem}{Theorem}[section]
\theoremstyle{definition}
\theoremstyle{remark}
\numberwithin{equation}{section}
\begin{document}

\title[]{Upper bound of critical sets of solutions of elliptic equations in the plane }



\author{ Jiuyi Zhu}
\address{
Department of Mathematics\\
Louisiana State University\\
Baton Rouge, LA 70803, USA\\
Email:  zhu@math.lsu.edu }

\subjclass[2010]{35A02, 35B27, 35J15.}
\keywords {Critical sets, singular sets, Carleman estimates }
\thanks{Zhu is supported in part by NSF grant OIA-1832961 and  DMS-2154506}

\dedicatory{Dedicated to Professor Carlos E. Kenig
on the Occasion of His 70th Birthday}
\begin{abstract} In this note, we investigate the measure of singular sets and critical sets of real-valued solutions of elliptic equations in two dimensions. These singular sets and critical sets are finitely many points in the plane. Adapting the Carleman estimates involving polynomial functions at singularities by Donnelly and Fefferman in \cite{DF90}, we obtain the upper bounds of singular points and critical points.
\end{abstract}

\maketitle
\section{Introduction}
We consider the upper bounds of singular sets for real-valued solutions of elliptic equations
\begin{align}
{\rm div}( A(x)\nabla u)+ b(x)\cdot \nabla u+c(x) u=0 \quad \quad \mbox{in}  \ \mathbb B_5,
\label{main-equ-s}
\end{align}
and critical sets for real-valued solutions of elliptic equations
\begin{align}
{\rm div}( A(x)\nabla u)+ b(x)\cdot\nabla u=0 \quad \quad \mbox{in}  \ \mathbb B_5,
\label{main-equ-c}
\end{align}
where $A(x)=(a_{ij}(x))_{2\times 2}$ is real-valued Lipschitz continuous, $b(x)=(b_1(x), b_2(x))$, $c(x)$ are bounded functions in the plane and $\mathbb B_5$ is the ball centered at origin with radius $5$. Especially, we
assume that $A(x)$ satisfies the uniform ellipticity conditions
\begin{align}
\Lambda_1 |\xi|^2\leq a_{ij}(x)\xi_i \xi_j \leq \Lambda_2 |\xi|^2,
\label{ellip}
\end{align}
and the Lipschitz continuity conditions
\begin{align}
|a_{ij}(x)-a_{ij}(y)|\leq \Lambda_0 |x-y| \quad \mbox{for any} \  x, y\in \mathbb B_5.
\label{lipschitz}
\end{align}
The functions $b(x)$ and $c(x)$ are bounded as
\begin{align}
\| {b}\|_{L^\infty(\mathbb B_5)} \leq M_0, \quad \|c\|_{L^\infty(\mathbb B_5)}\leq M_1.
\label{bboud}
\end{align}

 The singular sets are given by $\mathcal{S}=\{ x\in \mathbb B_2| u(x)=|\nabla u(x)|=0\}$. The critical sets are defined as $\mathcal{C}=\{ x\in \mathbb B_2| |\nabla u(x)|=0\}$. It is known that the singular sets and critical sets are finitely many discrete points for (\ref{main-equ-s}) and (\ref{main-equ-c}) in the plane, see e.g. the implicit bound of singular sets \cite{HHL98} and critical sets \cite{HHON99} for elliptic equations for any dimensions using compactness arguments. Around each singular point, the nodal sets consist of finitely many curves intersecting at this point with equal angles. The set of singular points is a subset of critical points. There are two types of points for critical points. One are the singular points, i.e. $\nabla u(x)=0$ and $u(x)=0$.  The other are the non-sigular points, i.e. $\nabla u(x)=0$, but $u(x)\not=0$.

  For real-valued harmonic functions, that is,  $A(x)=\delta_{ij}$, $b(x)=0$ and $c(x)=0$ in (\ref{main-equ-c}), Han \cite{H04} showed that $H^0(\mathcal{S})\leq C\mathcal{N}(4)$, where $\mathcal{N}(r)$ is the frequency function given by
  \begin{align}
  \mathcal{N}(r)=\frac{ r\int_{\mathbb B_r} |\nabla u|^2} {\int_{\partial \mathbb B_r}  u^2}.
  \label{frequency}
  \end{align}
Such upper bound can be also obtained by the analyticity of harmonic functions in \cite{L91}.
For complexification of real-valued function $u$, say $\tilde{u}$, the upper bound of singular sets $H^0(\{z\in \mathbb D_1| \tilde{u}(z)= \tilde{u}_{z_1} (z)=\tilde{u}_{z_2} (z)=0 \})\leq C\mathcal{N}^2(4)$ is obtained in \cite{H04}. Especially, some example is constructed in \cite{H04} to indicate that the real-valued property of solution is necessary to have the upper bound $H^0(\mathcal{S})\leq C\mathcal{N}(4)$. For the upper bound of singular sets of solutions in (\ref{main-equ-s}) for any dimension $n\geq 3$, an important conjecture
\begin{align} H^{n-2}(\mathcal{S})\leq C\mathcal{N}^2(4) \end{align}
was raised by Lin in \cite{L91}.
Naber and Valtorta \cite{NV17} obtained an exponential  upper bound of volume estimates for effective singular sets of (\ref{main-equ-s}) and effective critical sets of (\ref{main-equ-c}) using the new arguments of almost cone splitting  in \cite{CNV15} and the covering lemma in any dimensions. Especially, the exponential upper bound holds for singular points
$H^0(\mathcal{S})\leq e^{ C\mathcal{N}(4)}$ and for critical points $H^0(\mathcal{C})\leq e^{C\mathcal{\hat{N}}(4)}$ in \cite{NV17} in the plane, where \begin{align}\mathcal{\hat{N}}(r)=\frac{ r\int_{\mathbb B_r} |\nabla u|^2} {\int_{\partial \mathbb B_r} ( u-u(0))^2} \end{align} is the modified version of the frequency function (\ref{frequency}) for the study of critical sets. Note that we have considered the rescaled version in the aforementioned results.
It is also interesting to study the bounds of singular sets and critical sets of eigenfunctions. For singular sets, see \cite{DF90}, \cite{D92} for the upper bound of singular sets of Laplace eigenfunctions on surfaces,  and \cite{Z16} for the upper bound of singular sets of Steklov eigenfunctions on surfaces. For critical sets, see \cite{JN99} for bounded  number of critical points  and \cite{BLS21} for unbounded number of critical points of Laplace eigenfunctions with  some given Riemannian metrics on two dimensional torus $\mathbb T^2$, and \cite{Z21} for discussions of the upper bound of critical sets for Dirichlet eigenfunctions.


Let us introduce the double index for $u$ as
\begin{align}
N(u, r)=\log_2 \frac{\|u\|_{L^\infty(\mathbb B_{2r})}} {\|u\|_{L^\infty(\mathbb B_{r})}}.
\label{double-u}
\end{align}
Frequency function $\mathcal{N}( r)$ in (\ref{frequency}) characterizes the growth rate of the solutions. It implies that the bounds of double index $N(u, r)$. It is well-known that the frequency function $\mathcal{N}( r)$ is almost monotone, i.e. $e^{Cr}\mathcal{N}( r)$ is monotone for $0\leq r\leq r_0$, where $C$ and $r_0$ depend on the coefficients in (\ref{main-equ-s}). Based on the monotonicity of the frequency function,  $\mathcal{N}(r)$ and $N(u, r)$ are comparable in the sense that
\begin{align}
C_1 \mathcal{N}( r)-C\leq N(u, r)\leq C_2 \mathcal{N}( 3r)+C
\label{dou-com}
\end{align}
for $0<r\leq R_1\leq \frac{r_0}{3}$, where $0<C_1<1$, $C_2>1$, $C$ and $R_1$ depend on the coefficients in (\ref{main-equ-s}). Furthermore, we can get that the almost monotonicity of double index,
\begin{align}
 N(u, r) \leq C_3 N(u, tr)
 \label{mono-sin-1}
\end{align}
for $t>2$ and $0<r<R_0\leq R_1$, where $R_0$ depends on the coefficients in (\ref{main-equ-s}). See e.g. \cite{HL}, \cite{Lo18}.
Assume that $N=N(u, \frac{R_0}{2})\geq 1 $ is large. Then it follows from (\ref{dou-com}) that $ \mathcal{N}(\frac{3R_0}{2})$ is large. Our first result is to show the following upper bound of singular points.
\begin{theorem}
Assume that $u$ satisfies the equation (\ref{main-equ-s}). Then it holds that
\begin{align}
H^{0}(\{\mathcal{S}\cap \mathbb B_{\frac{R_0}{25}}\}   )\leq C \mathcal{N}(\frac{3R_0}{2}),
\end{align}
where $R_0$ depends on $\Lambda_0$, $\Lambda_1$, $\Lambda_2$, $M_0$ and $M_1$.
\label{th1}
\end{theorem}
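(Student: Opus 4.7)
The plan is to follow the Donnelly--Fefferman strategy adapted to the divergence-form setting: reduce (\ref{main-equ-s}) to a perturbation of the Laplacian via isothermal coordinates, quantify the vanishing order at each singular point, and then apply a Carleman estimate whose weight carries explicit polynomial singularities at those points in order to bound the total vanishing order by the frequency.

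\textbf{Reduction.} In two dimensions any Lipschitz symmetric uniformly elliptic $A(x)$ can be reduced to the identity by a bi-Lipschitz isothermal change of variables $\Phi$. The function $v=u\circ\Phi^{-1}$ then satisfies an equation of the form $\Delta v+\tilde b\cdot\nabla v+\tilde c v=0$ on a bi-Lipschitz image of $\mathbb B_5$, with $\tilde b,\tilde c\in L^\infty$. Since $\Phi$ maps $\mathcal{S}$ bijectively onto the singular set of $v$ and distorts the frequency only by multiplicative constants (compatibly with (\ref{dou-com})), it suffices to bound the singular set of $v$.

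\textbf{Local vanishing order.} At each $p\in\Phi(\mathcal{S})$ Bers' similarity principle yields an expansion $v(x)=P_{p}(x-p)+O(|x-p|^{d_p+1})$ with $P_p$ a nonzero homogeneous harmonic polynomial of degree $d_p$. Since $v(p)=|\nabla v(p)|=0$, one has $d_p\ge 2$. Consequently $H^{0}(\mathcal{S})\le \frac12\sum_{p\in\mathcal{S}}d_p$, and the problem is reduced to showing
\[
\sum_{p\in\mathcal{S}}d_p\le C\,\mathcal{N}\!\left(\tfrac{3R_0}{2}\right).
\]

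\textbf{Multi-singularity Carleman estimate and the main obstacle.} Enumerate the singular points as $p_1,\dots,p_K\in \mathbb B_{R_0/25}$ and set $Q(z)=\prod_{j=1}^{K}(z-p_j)^{d_j}$, a complex polynomial of degree $D=\sum_j d_j$. Adapting \cite{DF90}, I would apply a Carleman inequality to $v$ on an annular region inside $\mathbb B_{R_0/2}$ with weight
\[
\phi(x)=-\log|Q(x)|+\psi(x),
\]
where $\psi$ is a smooth radial convex profile chosen so that $\phi$ is pseudoconvex for $\Delta+\tilde b\cdot\nabla+\tilde c$. Because $\log|Q|$ is harmonic off its zeros and $v$ vanishes to exact order $d_j$ at $p_j$, the interior singularities of the weight are absorbed with no uncontrolled boundary contributions and the Carleman parameter effectively registers as $D$; combining this with the doubling information $N(u,\cdot)\le C\mathcal{N}$ supplied by (\ref{dou-com})--(\ref{mono-sin-1}) then yields $D\le C\mathcal{N}(\tfrac{3R_0}{2})$, which gives the theorem. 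The hardest step is verifying Hörmander-type pseudoconvexity of $\phi$ uniformly in the configuration $\{p_j\}$, so that the Carleman constant depends linearly on $\deg Q$ rather than on $K$; this is exactly where planarity is decisive, since the harmonicity of $\log|Q(z)|$ off its zeros together with the Cauchy--Riemann structure permits all singular weights to be managed as a single coherent weight and converts the multiplicities $d_j$ into the correct gain in the Carleman identity.
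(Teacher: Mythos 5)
Your overall strategy matches the paper's: reduce to a perturbation of $\Delta$ via isothermal coordinates, quantify the vanishing order at each singular point, and apply a Donnelly--Fefferman--type Carleman estimate whose weight carries a polynomial factor singular at those points. However, there are two genuine gaps in the way you have set this up.

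First, the multiplicity in your polynomial $Q$ is wrong by a shift. You take $Q(z)=\prod_j (z-p_j)^{d_j}$ with $d_j$ the full vanishing order of $v$ at $p_j$. But when you introduce a cutoff $\psi$ vanishing in $|z-p_j|\le\delta$ with $|\nabla\psi|\lesssim\delta^{-1}$ and substitute $f=\psi v$ into the Carleman estimate, the dominant error term is $\nabla\psi\cdot\nabla v$. Since $|\nabla v|\lesssim\delta^{d_j-1}$ near $p_j$ while $|Q|^{-2}\sim\delta^{-2d_j}$, one gets $|\nabla\psi\cdot\nabla v|^{2}|Q|^{-2}\sim\delta^{-4}$, whose integral over the annulus of area $\sim\delta^2$ diverges as $\delta\to0$. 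The correct choice (as in the paper and in \cite{DF90}) is to give $p_j$ multiplicity $d_j-2$, i.e.\ $P(z)=\prod_j (z-p_j)^{d_j-2}$; then the cutoff errors are $O(1)$ on an annulus of area $\delta^2$ and vanish in the limit. This shift has a substantive consequence: the argument then yields $\sum_j(d_j-2)\le C\mathcal N$, which only controls the points with vanishing order $\ge3$; the vanishing-order-$2$ points (exponent $0$ in $P$) must be handled by a separate device, namely working with $\prod_j(z-p_j)^{1/2}$ on a branched cover. Your proposal never addresses the order-$2$ case, which is the generic one and is precisely where counting, rather than summing multiplicities, must occur.

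Second, the step ``the Carleman parameter effectively registers as $D$'' misdescribes the actual mechanism. In the $\overline\partial$ version of the Carleman estimate the parameter is the double index $N$ (entering through the weight $e^{CN|z|^2}$), and the polynomial factor $|P|^{-2}$ comes in ``for free'' because $P$ is holomorphic, so $\overline\partial(F/P)=(\overline\partial F)/P$; the constant in the Carleman inequality is therefore independent of $\deg P$ and of the configuration $\{p_j\}$. Your concern about verifying pseudoconvexity uniformly in $\{p_j\}$ with constant linear in $\deg Q$ is thus misplaced. The bound $\sum(d_j-2)\le C\mathcal N$ is extracted not from the size of the Carleman parameter but by comparing the polynomial weight on an inner region against an outer annulus (and, separately, using the doubling/frequency control to bound the ratio of $L^2$ masses of $u$ over those regions). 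Finally, the paper's passage from the $\overline\partial$ estimate to the $\Delta$ estimate with an $N^2$ gain uses the real-valuedness of $u$ via $|\overline\partial f|=|\partial f|=\tfrac12|\nabla f|$; this is the precise point where the linear-in-$\mathcal N$ bound (rather than $\mathcal N^2$) is obtained, and it deserves explicit mention in the argument.
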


To study the critical points in (\ref{main-equ-c}), we introduce the double index for $\nabla u$ as
\begin{align}
\hat{N}(\nabla u, r) =\log_2 \frac{\|\nabla u\|_{L^\infty(\mathbb B_{2r})}} {\|\nabla u\|_{L^\infty(\mathbb B_{r})}}.
\label{C-double}
\end{align}

The frequency function $\hat{\mathcal{N}}( r)$ is almost monotone, i.e. $e^{Cr}\hat{\mathcal{N}}( r)$ is monotone for $0\leq r\leq r_0$, where $C$ and $r_0$ depend on the coefficients in (\ref{main-equ-c}). This monotonicity of the frequency function implies that $\hat{\mathcal{N}}(r)$ and $\hat{N}(u, r)$ are comparable in the sense that
\begin{align}
C_1 \hat{\mathcal{N}}( r)-C\leq \hat{N}(\nabla u, r)\leq C_2 \hat{\mathcal{N}}( 3r)+C
\label{dou-com-c}
\end{align}
for $0<r\leq R_1\leq \frac{r_0}{3}$, where $0<C_1<1$, $C_2>1$, $C$ and $R_1$ depend on the coefficients in (\ref{main-equ-c}). Moreover, the almost the monotonicity of double index holds,
\begin{align}
 \hat{N}(\nabla u, r) \leq C_3 \hat{N}(\nabla u, tr)
 \label{mono-cri-1}
\end{align}
for $t>2$ and $0<r<R_0\leq R_1$, where $R_0$ depends on the coefficients in (\ref{main-equ-c}). See e.g. \cite{LM18}, \cite{NV17}.
 Assume that $\hat{N}=\hat{N}(\nabla u, \frac{R_0}{2})\geq 1$ is large. Our second result is to show that
\begin{theorem}
Assume that $u$ satisfies the equation (\ref{main-equ-c}). Then it holds that
\begin{align}
H^{0}(\{\mathcal{C}\cap \mathbb B_{\frac{R_0}{25}}\}   )\leq C\hat{\mathcal{N}}(\frac{3R_0}{2}),
\end{align}
where $R_0$ depends on $\Lambda_0$, $\Lambda_1$, $\Lambda_2$ and $M_0$.
\label{th2}
\end{theorem}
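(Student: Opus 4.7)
My plan is to adapt the strategy behind Theorem~\ref{th1} by treating $\nabla u$, rather than $u$ itself, as the object whose zeros are to be counted. Since $A$ is Lipschitz in the plane, a bi-Lipschitz isothermal change of coordinates $y=\Phi(x)$ reduces (\ref{main-equ-c}) to
$$\Delta v + \tilde b(y)\cdot \nabla v = 0$$
on a slightly smaller ball, with $\tilde b$ bounded by a constant depending on $\Lambda_0,\Lambda_1,\Lambda_2,M_0$, and with the critical set and the frequency $\hat{\mathcal N}$ of $u$ and $v$ comparable up to universal constants. Thus one may assume throughout that the principal part is $\Delta$.

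Once the leading term is the Laplacian, I would observe that $f := v_{y_1} - i v_{y_2}$ satisfies $|\bar\partial f| \leq C|f|$, so $f$ is a generalized analytic function in the sense of Vekua. The critical set of $v$ coincides with the zero set of $f$, and at each such zero $p_j$ the function $f$ has a finite vanishing order $n_j\geq 1$; consequently $K := H^0(\mathcal C \cap \mathbb B_{R_0/25}) \leq \sum_j n_j$. I would then adapt the Donnelly--Fefferman Carleman estimate \cite{DF90} with a polynomial weight carrying a prescribed singularity at each critical point, in the spirit of
$$\varphi_\alpha(x) = e^{\alpha \psi(|x|)} \prod_{j=1}^K |x-p_j|^{-n_j},$$
where $\psi$ is the usual radial log-type weight and the annulus-level integrals are dyadically decomposed around each $p_j$. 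Plugging the resulting weighted $L^2$ inequality against $v$ (or $\nabla v$) and absorbing the drift $\tilde b\cdot\nabla v$ into the right-hand side extracts the quantitative bound $\sum_j n_j \leq C\,\hat N(\nabla v, R_0/2)$. Combining this with the comparability (\ref{dou-com-c}) between double index and frequency and the near-monotonicity (\ref{mono-cri-1}) yields the claim $K \leq C\,\hat{\mathcal N}(3R_0/2)$.

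The main technical obstacle will be the Carleman estimate itself: a standard Donnelly--Fefferman inequality handles a single polynomial singularity, whereas here one needs a composite weight with $K$ simultaneous singularities whose locations $p_j$ and strengths $n_j$ are determined a posteriori by $v$. One must verify subharmonicity of the composite weight away from the $p_j$'s, control the interaction between distant singularities, partition the ball into regions where at most one $p_j$ dominates, and ensure that the first-order drift $\tilde b\cdot \nabla v$ is absorbed without erasing the gain that controls $\sum n_j$. The linear bound in $\hat{\mathcal N}$ (as opposed to the quadratic bound natural in higher dimensions) is ultimately a two-dimensional phenomenon, arising because each critical point can be treated as an isolated, generically simple zero of the near-analytic function $f$.
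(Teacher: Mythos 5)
Your overall strategy coincides with the paper's: reduce (\ref{main-equ-c}) to $\Delta u + \tilde b\cdot\nabla u = 0$ via the Aronszajn--Krzywicki--Szarski construction and isothermal coordinates, apply a Donnelly--Fefferman Carleman inequality with a weight carrying polynomial singularities at the critical points to $\partial u$ (the paper substitutes $f = \psi\,\partial u$ into (\ref{complex-car})), absorb the drift term using $\hat N \geq CM_0$, and close the argument with (\ref{dou-com-c}) and (\ref{mono-cri-1}). The identification of $\partial u$ as a Vekua-type generalized analytic function with isolated zeros of finite order $n_j$ is the right conceptual framing and matches the paper's appeal to Hartman--Wintner for the local polynomial approximation of $\nabla u$.

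Two points in your plan need correction, however. First, you list as the ``main technical obstacle'' the Carleman estimate with a composite weight carrying $K$ simultaneous singularities whose locations and orders are determined a posteriori, together with the need to verify subharmonicity, control interactions between distant singularities, and partition the ball into regions where one $p_j$ dominates. This is not an open gap: the inequality (\ref{complex-car}) from \cite{DF90} is already stated and proved for an arbitrary polynomial $P(z) = \prod(z-z_i)^{d_i}$ with multiple zeros, for any smooth test function vanishing near the $z_i$'s, and for any $\alpha>0$; there is no sheet-by-sheet interaction to manage. You should be citing this rather than re-deriving it.

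Second, your exponent choice $\prod_j|x-p_j|^{-n_j}$ glosses over a sensitive point. The Donnelly--Fefferman weight enters as $|P|^{-2}$, and the correct exponent in $P$ at a critical point of order $n_j$ (i.e.\ $\nabla u$ vanishing to order $n_j$) is $d_j = n_j - 1$, not $n_j$: with $d_j$, the commutator terms $|\nabla\psi|^2|\nabla u|^2|P|^{-2}$ near $z_j$ stay bounded and their integral over $\{\delta\le|z-z_j|\le 2\delta\}$ tends to zero as $\delta\to 0$, whereas with exponent $n_j$ one picks up an $O(1)$ contribution per critical point that does not disappear and destroys the desired conclusion $K\le C\hat N$. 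Moreover, the generic case $n_j=1$ (simple zeros of $\partial u$) gives $d_j=0$, so the polynomial weight no longer sees $z_j$ at all; the paper handles this separately by passing to $P_1(z)=\prod(z-z_j)^{1/2}$ on a branched cover of the punctured disc, an ingredient your plan omits entirely even though it is the case that actually counts the bulk of the critical points.
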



This note is organized as follows. In section 2, we reduce the second order elliptic operators with Lipschitz leading coefficients to the Euclidean Laplace operators. Then we present the Carleman estimates involving polynomial functions at singularities in the plane. Section 3 is devoted to the derivation of upper bounds of singular sets and critical sets in Theorem \ref{th1} and \ref{th2}.  The letter $C$ and $C_i$ denote some generic positive constants and do not depend on $u$. It may vary in different
lines and sections.

\noindent{\bf Acknowledgement:} The author would like to thank Professor Alexander Logunov for reading the paper and helpful discussions. After the paper was posted in arXiv, the author was informed by Professor Giovanni Alessandrini that the similar conclusion in Theorem \ref{th2} was shown by a different approach in \cite{A88}. It also was pointed out that some interesting estimates about the number and multiplicities of critical points with the boundary data were shown in e.g. \cite{A87}, \cite{AM92}.

\section{Carleman estimates for Euclidean Laplace }
In this section, we first  construct  Lipschitz metrics from the Lipschitz leading coefficient $A(x)$ in  (\ref{main-equ-s}) and (\ref{main-equ-c}). The arguments are adapted from \cite{AKS62}. We present the details for the convenience of the readers. Then we reduce the study on Laplace-Beltrami operator to Euclidean Laplace by isothermal coordinates. At last, we present the Carleman estimates involving polynomials for Euclidean Laplace.
 Without loss of generality, we consider the construction of geodesic coordinates at origin.   We introduce a ``radial" coordinate and a conformal change metric ${g}_{ij}$ in $\mathbb B_{r_0}$,
 \begin{equation}
 r=r(x)=({a}^{ij}(0)x_i x_j)^{\frac{1}{2}}
 \label{radial}
 \end{equation}
 and
\begin{equation*}
{g}_{ij}(x)= {a}^{ij}(x){\hat{\psi}}(x),
\end{equation*}
where \begin{equation} {\hat{\psi}}(x)= {a}_{kl}(x)\frac{\partial r}{\partial x^k}\frac{\partial r}{\partial x^l}\end{equation} for $x\not=0$ and  $({a}^{ij})=({a}_{ij})^{-1}$ is the inverse matrix. In the whole paper, we adopt the Einstein notation. The summation of index is understood. From the assumption of (\ref{ellip}),  $\hat{\psi}$ is bounded above and below as
\begin{equation*}
\frac{\Lambda_1}{\Lambda_2}\leq \hat{\psi}\leq \frac{\Lambda_2}{\Lambda_1}.
\end{equation*}
 It is easy to see that $\hat{\psi}$ is Lipschitz continuous. With these auxiliary quantities, the following replacement of geodesic polar coordinates are constructed in \cite{AKS62}.
In the geodesic ball
$\hat{ \mathbb B}_{\hat {r}_0}=\{ x\in  \mathbb B_{r_0}| r(x)\leq \hat{r}_0\},$ the following properties hold: \medskip \\
(i) ${g}_{ij}(x) $ is Lipschitz continuous;\medskip\\
(ii) ${g}_{ij}(x) $ is uniformly elliptic with $ \frac{\Lambda_1}{ \Lambda^2_2 }\|\xi\|^2\leq {g}_{ij}(x)\xi_i\xi_j\leq \frac{\Lambda_2}{ \Lambda^2_1}\|\xi\|^2.  $ \medskip\\
(iii) Let $\Sigma =\partial \hat{ \mathbb B}_{\hat {r}_0}$. We can parametrize $\hat{ \mathbb B}_{\hat {r}_0} \backslash \{0\}$ by the polar coordinate $r$ and $\theta$, with $r$ defined by (\ref{radial}) and $\theta$ be the local coordinates on $\Sigma$. In these polar coordinates, the metric can be written as
\begin{equation}
{g}_{ij}(x)  dx^i dx^j= dr^2+ r^2 {\gamma} d\theta d\theta
\end{equation}
with ${\gamma}=\frac{1}{r^2} {g}_{kl}(x) \frac{\partial x^k}{\partial \theta}\frac{\partial x^l}{\partial \theta}$. \medskip\\

The existence of the coordinates $(r, \ \theta)$ allows us to pass to ``geodesic polar coordinates".
In particular, $r(x)=({a}^{ij}(0)x_i x_j)^{\frac{1}{2}}$ is the geodesic distance to the origin in the metric ${g}_{ij}$. Thus, we may identify $\hat{ \mathbb B}_{\hat {r}_0}$ as the Euclidean ball $\mathbb B_{\hat {r}_0}$.
 The  Laplace-Beltrami operator is given as $$\triangle_{{g}}=\frac{1}{\sqrt{{g} }} \frac{\partial}{\partial x_i}( {g}^{ij}\sqrt{{g}} \frac{\partial}{\partial x_j}   ),$$ where ${g}= \det({g}_{ij})$.  If $ u$ is a solution of (\ref{main-equ-s}), in the new metric ${g}_{ij}$, then $ u$ is locally the solution of the equation
\begin{equation}
\triangle_{{g}}  u +\hat{b}(x)\cdot\nabla_g  u+ \hat{c}(x)  u=0 \quad \mbox{in} \ {\mathbb B}_{\hat{r}_0},
\label{target}
\end{equation}
where
\begin{equation}
\left\{ \begin{array}{lll}
&\hat{b}_i= -\frac{1} {2\tilde{a}\hat{\psi}} \frac{\partial \tilde{a}}{\partial x_i} +\frac{1}{\hat{\psi}}{b}_i,   \medskip \\
&\hat{c}(x)=\frac{{c}(x)}{\hat{\psi}},
\end{array}
\right.
\end{equation}
and $\tilde{a}=\det(a^{ij})$.
By the Lipschitz continuity of $A(x)$, then $\tilde{a}$ is Lipschitz continuous. Hence $\hat{b}=(\hat{b}_1, \hat{b}_2)$ is bounded.
By the properties of $\hat{\psi}$, and the conditions (\ref{bboud}) on $b$ and $c$, we still write the conditions for $\hat{b}$ and $\hat{c}$ as
\begin{equation}
\left\{ \begin{array}{lll}
\|\hat {b}\|_{L^\infty({\mathbb B}_{\hat{r}_0})}\leq CM_0, \medskip \\
\|\hat { c}\|_{L^\infty({\mathbb B}_{\hat{r}_0})}\leq C M_1,
\end{array}
\right.
\label{targetcon}
\end{equation}
where $C$ depends on $\Lambda_1$ and $\Lambda_2$.
By the same construction of Lipschitz metric $g$, the solution $u$ in the equation (\ref{main-equ-c}) satisfies
\begin{equation}
\triangle_{{g}}  u +\hat{b}(x)\cdot\nabla_g   u=0 \quad \mbox{in} \ {\mathbb B}_{\hat{r}_0},
\label{target--}
\end{equation}
with $\hat{b}= -\frac{1} {2\tilde{a}\hat{\psi}} \frac{\partial \tilde{a}}{\partial x_i} +\frac{1}{\hat{\psi}}{b}_i  $
and $\|\hat {b}\|_{L^\infty({\mathbb B}_{\hat{r}_0})}\leq CM_0$.

Applying the isothermal coordinates for the surfaces with Lipschitz Riemannian metrics in  \cite{Chern55} or \cite{HW55} (or the so called pseudo-analyticity used on page 79 in \cite{B53}),
we have $\triangle_g =\phi(x)^{-1}\triangle$, where $\phi(x)>0$ is continuous. Therefore, we can write (\ref{target}) as
\begin{equation}
\triangle  u +\tilde {b}(x)\cdot\nabla  u+ \tilde{c}(x)  u=0 \quad \mbox{in} \ {\mathbb B}_{\hat{r}_0}
\label{target-1}
\end{equation}
and (\ref{target--}) as
\begin{equation}
 \triangle u +\tilde {b}(x)\cdot\nabla   u=0 \quad \mbox{in} \ {\mathbb B}_{\hat{r}_0},
\label{target-2}
\end{equation}
where $\tilde {b}(x)$ and $\tilde{c}(x)$ satisfy the same conditions as (\ref{targetcon}).

Next we will establish Carleman estimates involving polynomial functions at singularities for differential operators with the Euclidean Laplace as the leading term. It is directly from the Carleman estimates from \cite{DF90}. We present the details to show the role of real-valued functions and the double index $N$ in the Carleman estimates.
Let
\begin{align*} \overline{\partial}=\frac{1}{2}(\frac{\partial}{\partial x_1}-i \frac{\partial}{\partial x_2}) \quad \mbox{and} \quad  {\partial}=\frac{1}{2}(\frac{\partial}{\partial x_1}+i \frac{\partial}{\partial x_2}).
 \end{align*} Note that $\overline{\partial}{\partial}=\frac{1}{4}\triangle$. Let $P(z)= \prod (z-z_i)^{d_i}$ for some $d_i\geq 0$, where $z=x_1+ i x_2$ and $z_i=x_1^i+ ix_2^i$ are in the complex plane.  It is shown in \cite{DF90} that
\begin{align}
\int_{\mathbb D_5}| \overline{\partial} F|^2 |P|^{-2}e^{C \alpha |z|^2} \geq
 C\alpha \int_{\mathbb D_5}| F|^2|P|^{-2} e^{C \alpha |z|^2}
\label{complex-car}
\end{align}
for any smooth (possibly complex-valued) function $F\in C^\infty_0(  \mathbb D_5\backslash \cup \mathbb D_i(z_i))$ and positive constant $\alpha$. Here $\mathbb D_5$ is a ball in the complex plane with radius $5$ and $\mathbb D_i(z_i)$ are some small pairwise disjoint balls centered at $z_i$ with radius $\delta$.
Let $f\in C^\infty_0(\mathbb D_5\backslash \cup \mathbb D_i(z_i))$ be a real-valued function. We will show the following Carleman estimates hold
\begin{align}
\int_{\mathbb B_5}|\triangle f+ \tilde{b}(x)\cdot \nabla f + \tilde{c}(x) f|^2 |P|^{-2} e^{C N|z|^2}\geq CN^2 \int_{\mathbb B_5}| f|^2|P|^{-2} e^{C N|z|^2}.
\label{main-Car}
\end{align}

 Choosing $F=\partial f$ and $\alpha=N$ in (\ref{complex-car}), we obtain
\begin{align}
\int_{\mathbb D_5}| \triangle f|^2 |P|^{-2} e^{C N|z|^2}\geq
 CN \int_{\mathbb D_5}| \partial f|^2|P|^{-2} e^{C N|z|^2}.
 \label{NNN}
\end{align}
Since $f$ is a real-valued function, it holds that
\begin{align*}
|\overline{\partial} f|=|\partial f|=\frac{1}{2}|\nabla f|.
\end{align*}
Let $F=f$ in (\ref{complex-car}) and $\alpha=N$. We have
\begin{align}
\int_{\mathbb D_5}| \nabla f|^2 |P|^{-2}e^{C N|z|^2} \geq
 CN \int_{\mathbb D_5}| f|^2|P|^{-2} e^{C N|z|^2}.
 \label{gra-c}
\end{align}
Furthermore,
it follows from (\ref{complex-car}) and (\ref{NNN}) that
\begin{align}
\int_{\mathbb D_5}| \triangle f|^2 |P|^{-2}e^{C N|z|^2} \geq
 CN^2 \int_{\mathbb D_5}| f|^2|P|^{-2} e^{C N|z|^2}.
 \label{lap-c}
\end{align}
In order to consider the equation (\ref{main-equ-s}), we need to take the first order term and zero order term into considerations. Assume that $N\geq CM_0$ and $N\geq CM_1$. We identify $\mathbb D_5$ as $\mathbb B_5$, and $z_i=(x^i_1, x^i_2)$ in $\mathbb R^2$. Thus, the inequalities (\ref{gra-c}) and (\ref{lap-c}) hold for $f\in C^\infty_0(\mathbb B_5\backslash \cup D_i(z_i))$, where $ D_i(z_i)$ are  small pairwise disjoint balls centered at $z_i$ with radius $\delta$ in $\mathbb B_5$.  By the triangle inequality, (\ref{targetcon}), (\ref{gra-c}) and (\ref{lap-c}), we can obtain the following Carleman estimates,
\begin{align}
\int_{\mathbb B_5}| \triangle f+ \tilde{b}(x)\cdot \nabla f+ \tilde{c}(x)f|^2 |P|^{-2}e^{C N|z|^2} &\geq
\int_{\mathbb B_5}  | \triangle f|^2   |P|^{-2}e^{C N|z|^2}- CM_1\int_{\mathbb B_5}  | \nabla f|^2   |P|^{-2}e^{C N|z|^2} \nonumber \\
&-CM_0 \int_{\mathbb B_5}  |f|^2   |P|^{-2}e^{C N|z|^2}\nonumber \\
&\geq CN^2 \int_{\mathbb B_5}  |f|^2   |P|^{-2}e^{C N|z|^2}\nonumber \\ &-CM_1 N \int_{\mathbb B_5}  | f|^2   |P|^{-2}e^{C N|z|^2} \nonumber \\
&\geq CN^2 \int_{\mathbb B_5}  |f|^2   |P|^{-2}e^{C N|z|^2}.
\end{align}
Thus, the Carleman estimates (\ref{main-Car}) are arrived.

\section{Upper bounds of Singular points and Critical points}

In this section, we first study the upper bound of  singular points for (\ref{main-equ-s}) by adapting the arguments in \cite{DF90}. We may choose $R_0\leq \frac{\hat{r}_0}{2}$. By rescaling, we set $R_0=5$ and $\hat{r}_0=10$. Suppose that $|z_i|\leq \frac{1}{5}$. We first consider the singular points with the vanishing order more than two. Assume that $u$ vanishes at $z_i$ with order $n_i+1$, where $n_i=d_i+1$ with $d_i\geq 1$. If the vanishing order is two, i.e. $n_i=1$, we will consider it with some special argument later on.
Near the singular point $z_i$, $u(x)$ can be approximated by a homogeneous polynomial with degree $d_i+2$ in $\hat{D}_i(z_i)$, where $\hat{D}_i$ are the pairwise disjoint small  disks with radius $\delta_i$ centered at $z_i$, see e.g. \cite{B55}. Since singular points are discrete and finite, such small $\delta_i$ exist. We choose the smallest $\delta_i$ such that $\delta=\min \delta_i$. Then we can assume that the $D_i(z_i)$ are small disjoint disks with radius $\delta$.  We choose the polynomial $P(z)=\prod (z-z_i)^{d_i}$.
Let  $f\in C^\infty_0(\mathbb B_5\backslash \cup_{i} D_i(z_i))$ be a real-valued function.

Based on the above preparations, we are ready to show the proof of Theorem \ref{th1}.
\begin{proof}[Proof of Theorem \ref{th1}]
As discussed above, we first consider the case $d_i\geq 1$.
We choose a cut off function $\psi\in C^\infty_{0}(\mathbb B_1\backslash \cup_{i}  D_i(z_i))$ with the following properties: \\
(1) $\psi(z)=1$ if $|z|\leq \frac{1}{2}$ and $|z-z_i|\geq 2 \delta$, \medskip \\
(2) $|\nabla \psi|\leq C$ and $|\triangle \psi|\leq C$  if $|z|\geq \frac{1}{2}$, \medskip \\
(3) $|\nabla \psi|\leq C \delta^{-1}$ and $|\triangle \psi|\leq C \delta^{-2}$  if $|z-z_i|\leq 2 \delta$. \\

We substitute $f=\psi u$ into the Carleman estimates (\ref{main-Car}).
Direct calculations show that
\begin{align}
\triangle f+ \tilde{b}(x)\cdot \nabla f+ \tilde{c}(x) f&=\triangle \psi u+2\nabla \psi\cdot \nabla u+\psi \triangle u+ \tilde{b}(x)\cdot\nabla \psi u+\tilde{b}(x)\cdot\nabla u\psi+\tilde{c}(x)u\psi \nonumber \\
&=\triangle \psi u+2\nabla \psi\cdot \nabla u+ \tilde{b}(x)\cdot\nabla \psi u,
\end{align}
where we have used the equation (\ref{target-1}). In the neighborhood $|z-z_i|\leq 2\delta$, by the vanishing order of $u$ at $z_i$, we can check that
\begin{align}
|\nabla \psi u|\leq C \delta^{d_i},  \quad |\triangle\psi  u|\leq C \delta^{d_i}, \  \ \mbox{and} \ |\nabla u\cdot \nabla \psi |\leq C \delta^{d_i}.
\end{align}
Near the neighborhood $|z-z_i|\leq 2\delta$, it holds that
 \begin{align}
 |P|^{-2}|\triangle (\psi u)+\tilde{b}(x)\cdot \nabla (\psi u)+\tilde{c}(x) \psi u|^2 \leq C \delta^{-2d_i}  \delta^{2d_i}\leq C.
 \end{align}
 Thus, $|P|^{-2}|\triangle (\psi u)+\tilde{b}(x)\cdot \nabla (\psi u)+\tilde{c}(x) u|^2$ is uniformly integrable  near the singular points $z_i$ as $\delta\to 0$.
If $|z|\geq \frac{1}{2}$, from the assumption of $\psi$, we can see that
\begin{align*}
|\triangle f+\tilde{b}\cdot \nabla f+\tilde{c}(x) f|&=|\triangle \psi u+2\nabla \psi\cdot \nabla u+ \tilde{b}\cdot\nabla \psi u| \nonumber \\
&\leq C(|u|+|\nabla u|).
\end{align*}
Substituting  $f=\psi u$ in the Carleman estimates (\ref{main-Car}) and applying the Lebegue dominated convergence theorem as $\delta\to 0$, we have
\begin{align*}
\int_{\frac{1}{2}\leq |z|\leq 1} (|u|^2+|\nabla u|^2) |P|^{-2} e^{CN|z|^2} \geq CN^2 \int_{ |z|\leq \frac{1}{3}} |u|^2 |P|^{-2} e^{CN|z|^2}.
\end{align*}
We take the maximum and minimum of $|P|$ out of the integrations. It holds that
\begin{align}
 e^{CN} \max_{\frac{1}{2}\leq |z|\leq 1} |P|^{-2} \int_{\frac{1}{2}\leq |z|\leq 1} (|u|^2+|\nabla u|^2) \geq CN^2 \min_{ |z|\leq \frac{1}{3}}  |P|^{-2}  \int_{ |z|\leq \frac{1}{3}} |u|^2.
\end{align}
By standard elliptic estimates, we get that
\begin{align}
 e^{CN} \max_{\frac{1}{2}\leq |z|\leq 1} |P|^{-2} \int_{\frac{2}{5} \leq |z|\leq \frac{6}{5}} |u|^2 \geq CN^2 \min_{ |z|\leq \frac{1}{3}}  |P|^{-2}  \int_{ |z|\leq \frac{1}{3}} |u|^2.
 \label{compare}
\end{align}
We claim that
\begin{align}
e^{C\sum d_i}&\leq \frac{ \min_{ |z|\leq \frac{1}{3}}  |P|^{-2}}{ \max_{\frac{1}{2}\leq |z|\leq 1} |P|^{-2}  }.
\label{poly-com}
\end{align}

To show (\ref{poly-com}), it is equivalent to prove
\begin{align}
e^{C\sum d_i}&\leq \big( \frac{ \min_{ \frac{1}{2}\leq |z|\leq 1 }  |P|}{ \max_{ |z|\leq \frac{1}{3}} |P| }\big)^2.
\label{poly-com2}
\end{align}
Since $|z_i|\leq \frac{1}{5}$, we have
\begin{align}
\min_{ \frac{1}{2}\leq |z|\leq 1 }  |P|\geq (\frac{1}{2}-\frac{1}{5})^{C\sum d_i}=(\frac{3}{10})^{C\sum d_i}
\label{poly1}
\end{align}
and
\begin{align}
\max_{ |z|\leq \frac{1}{3} }  |P|\leq (\frac{1}{3}-\frac{1}{5})^{C\sum d_i}=(\frac{2}{15})^{C\sum d_i}.
\label{poly2}
\end{align}
Together with (\ref{poly1}) and (\ref{poly2}), we arrive at (\ref{poly-com2}), i.e. (\ref{poly-com}).
It follows from (\ref{compare}) and (\ref{poly-com}) that
\begin{align}
e^{C\sum d_i}\leq \frac{ e^{CN} \int_{\frac{2}{5}\leq |z|\leq \frac{6}{5}} |u|^2} {\int_{|z|\leq \frac{1}{3}} |u|^2  }.
\end{align}
By the almost monotonicity of the double index  $N(u, r)$ in (\ref{mono-sin-1}), it holds that
\begin{align}
\frac{\int_{\frac{2}{5}\leq |z|\leq\frac{6}{5}} |u|^2} {\int_{|z|\leq  \frac{1}{3}} |u|^2}\leq e^{CN}.
\end{align}
Thus, we have
\begin{align}
\sum d_i\leq CN.
\label{con-n}
\end{align}
Hence,  we arrive at the conclusion in the theorem in the case $d_i\geq 1$.

Now we treat the case for singular points with vanishing order two, i.e. $d_i=0$. We consider the polynomial $P_1(z)=\prod (z-z_i)^\frac{1}{2}$.
We want to replace $P(z)$ in the above arguments by $P_1(z)$. Near the singular point $z_i$, we can still show that $ |P_1(z)|^{-2}|\triangle (\psi u)+\tilde{b}(x)\cdot \nabla (\psi u)+\tilde{c}(x) \psi u|^2$ is uniformly integrable as $\delta\to 0$. If $|z-z_i|\leq 2\delta$, we can check
\begin{align*}
|\nabla \psi u|\leq C ,  \quad |\triangle\psi  u|\leq C , \  \ \mbox{and} \ |\nabla u\cdot \nabla \psi |\leq C .
\end{align*}
Thus, \begin{align} |P_1(z)|^{-2}|\triangle (\psi u)+\tilde{b}(x)\cdot \nabla (\psi u)+\tilde{c}(x) \psi u|^2 \leq C\delta^{-1},\end{align}
which is uniformly integral in $\mathbb B_5$. However, $P_1(z)$ is not defined as single-valued holomorphic function. As indicated in \cite{DF90}, we can pass to a finite branched cover of the disc $\mathbb{D}_5$ punctured at $z_i$. Since the Carleman estimates  (\ref{complex-car}) are obtained by integration by parts, these Carleman estimates are arrived in a straightforward manner. The integrand in these estimates involves function such as $f$ and $|P_1|$ which are independent of the sheet. Therefore, we still have the Carleman estimates (\ref{main-Car}) in the punctured disc. Following the arguments as we did to get $(\ref{con-n})$ for $n_i\geq 2$, the conclusion $\sum\limits_{z_i\in \mathbb B_{1/5}} 1\leq CN $ will still be arrived for $n_i=1$. Recall that we have done a rescaling argument to have $R_0=5$. Thus, the estimate (\ref{con-n}) implies that
\begin{align}
H^{0}(\{\mathcal{S}\cap \mathbb B_{\frac{R_0}{25}}\}   )\leq C  N.
\end{align}
It follows from (\ref{dou-com}) that $N\leq C\mathcal{N}(\frac{3R_0}{2})$ for some large $N$. Therefore, the proof of the theorem is arrived.

\end{proof}


Next we show the upper bound of critical sets for (\ref{main-equ-c}).  As before, we may choose $R_0\leq \frac{\hat{r}_0}{2}$ and set $R_0=5$ and $\hat{r}_0=10$ by rescaling. Suppose that the critical points $|z_i|\leq \frac{1}{5}$.
 Near the critical point $z_i$,  $u(z)$ can be approximated by the function $P_c + u(z_i)$ in $\hat{D}_i(z_i)$, where $P_c$ is some homogeneous polynomial with degree $d_i+2$ and $\hat{D}_i(z_i)$ is some small disk with radius $\delta_i$.  Furthermore, $\nabla u(z)$ can be approximated by $\nabla P_c$ in $\hat{D}_i(z_i)$, see  e.g. \cite{HW53}. In particular, if $u(z_i)=0$, then $z_i$ is the singular point.  Since critical points are finitely many discrete points, we choose the smallest $\delta_i$ such that $\delta=\min \delta_i$ and assume that the $\mathbb D_i(z_i)$ are small disjoint disks with radius $\delta$.

\begin{proof}[Proof of Theorem \ref{th2}]
 We still consider the case $d_i\geq 1$ at the beginning. If $f\in C^\infty_0(\mathbb D_5\backslash \cup_{i}\mathbb D_i(z_i))$ and $P(z)=\prod (z-z_i)^{d_i}$, by choosing $\alpha=\hat{N}$ and $F=f$ in (\ref{complex-car}),  it follows that the following Carleman estimates hold
\begin{align}
\int_{\mathbb D_5}|\overline{\partial} f  |^2 |P|^{-2} e^{C \hat{N}|z|^2}\geq C \hat{N} \int_{\mathbb D_5}| f|^2 |P|^{-2} e^{C \hat{N}|z|^2}.
\label{Carle-cri}
\end{align}

We choose the same real-valued cut-off function $\psi$ in the last theorem. That is, the cut-off function $\psi\in C^\infty_{0} (\mathbb D_1\backslash \cup_{i} \mathbb D_i(z_i))$ satisfies the following properties: \\
(1) $\psi(z)=1$ if $|z|\leq \frac{1}{2}$ and $|z-z_i|\geq 2 \delta$, \medskip \\
(2) $|\nabla \psi|\leq C$ and $|\triangle \psi|\leq C$  if $|z|\geq \frac{1}{2}$, \medskip \\
(3) $|\nabla \psi|\leq C \delta^{-1}$ and $|\triangle \psi|\leq C \delta^{-2}$  if $|z-z_i|\leq 2 \delta$. \\

 Substituting $f=\psi \partial u$ in the Carleman estimates (\ref{Carle-cri}), we have
\begin{align}
\int_{\mathbb D_5}|\overline{\partial}\psi \partial u  |^2 |P|^{-2} e^{C \hat{N}|z|^2}+
\int_{\mathbb D_5}\psi^2 | \triangle u |^2|P|^{-2} e^{C \hat{N}|z|^2}
\geq C \hat{N} \int_{\mathbb D_5}\psi^2| \partial u|^2|P|^{-2} e^{C \hat{N}|z|^2}.
\end{align}
From the equation (\ref{target-2}) and the fact that $\tilde{b}$ is bounded, we get
\begin{align*}
\int_{\mathbb D_5}|\overline{\partial}\psi \partial u  |^2 |P|^{-2} e^{C \hat{N}|z|^2} \geq C \hat{N} \int_{\mathbb D_5}\psi^2| \partial u|^2 |P|^{-2} e^{C \hat{N}|z|^2}.
\end{align*}
Thus, we have
\begin{align}
\int_{\mathbb B_5}|\nabla \psi|^2|\nabla u|^2 |P|^{-2} e^{C \hat{N}|z|^2} \geq C \hat{N} \int_{\mathbb B_3}\psi^2| \nabla u|^2 |P|^{-2} e^{C \hat{N}|z|^2}.
\label{carle-real}
\end{align}

Near the critical point $z_i$, for $|z-z_i|\leq 2 \delta$, we can check that
\begin{align*}
|\nabla \psi \cdot \nabla u|\leq C\delta^{d_i}. \quad
\end{align*}
Thus,
\begin{align*}
|\nabla \psi|^2|\nabla u|^2 |P|^{-2}\leq C\delta^{2d_i} \delta^{-2 d_i}\leq C.
\end{align*}
Then $|\nabla \psi|^2|\nabla u|^2 |P|^{-2}$ is uniformly integrated as $\delta\to 0$.
From the assumption of $\psi$, applying the dominated convergence theorem as $\delta\to 0$, we have
\begin{align}
\int_{\frac{1}{2}\leq |z|\leq 1} |\nabla u|^2 |P|^{-2} e^{C\hat{N}|z|^2} \geq C\hat{N} \int_{ |z|\leq \frac{1}{3}} |\nabla u|^2 |P|^{-2} e^{C\hat{N}|z|^2}.
\end{align}
By taking the maximum and minimum of $|P|$, we get
\begin{align}
 e^{C\hat{N}} \max_{\frac{1}{2}\leq |z|\leq 1} |P|^{-2} \int_{\frac{1}{2}\leq |z|\leq 1} |\nabla u|^2 \geq C\hat{N} \min_{ |z|\leq \frac{1}{3}}  |P|^{-2}  \int_{ |z|\leq \frac{1}{3}} |\nabla u|^2.
 \label{cri-sum}
 \end{align}
From (\ref{poly-com}), we have
\begin{align}
e^{C\sum d_i}&\leq \frac{ \min_{ |z|\leq \frac{1}{3}}  |P|^{-2}}{ \max_{\frac{1}{2}\leq |z|\leq 1} |P|^{-2}  }.
\label{poly-com-2}
\end{align}
It follows from (\ref{cri-sum}) and (\ref{poly-com-2}) that
\begin{align}
e^{C\sum d_i}\leq \frac{ e^{C\hat{N}} \int_{\frac{1}{2}\leq |z|\leq 1} |\nabla u|^2} {\int_{|z|\leq \frac{1}{3}} |\nabla u|^2  }.
\end{align}
By the almost monotonicity of the double index of $\hat{N}(\nabla u, r)$ in (\ref{mono-cri-1}), we show that
\begin{align}
\frac{\int_{\frac{1}{2}\leq |z|\leq 1} |\nabla u|^2} {\int_{|z|\leq \frac{1}{3}} |\nabla u|^2}\leq e^{C\hat{N}}.
\end{align}
Hence, we arrive at
\begin{align}
\sum d_i\leq C\hat{N}.
\label{con-n-2}
\end{align}
Therefore, in the case $d_i\geq 1$, the conclusion of the theorem follows.

Now we deal with the case for critical points with vanishing order two, i.e. $d_i=0$. We follow the arguments in the last theorem for singular sets with vanishing order two.  We replace $P(z)$ in the above arguments by $P_1(z)$, where $P_1(z)=\prod (z-z_i)^\frac{1}{2}$.  If $|z-z_i|\leq 2\delta$, we can check
$ |\nabla u\cdot \nabla \psi |\leq C$.
Thus, \begin{align} |P_1(z)|^{-2}|\nabla u\cdot \nabla \psi |^2 \leq C\delta^{-1},\end{align}
which is uniformly integral in $\mathbb B_5$ as $\delta\to 0$. However, $P_1(z)$ is not defined as single-valued holomorphic function. We can pass to a finite branched cover of the disc $\mathbb{D}_5$ punctured at $z_i$. Since the Carleman estimates  (\ref{Carle-cri}) are obtained by integration by parts, these Carleman estimates are arrived similarly. The integrand in these estimates involves functions such as $f$ and $|P_1|$ which are independent of the sheet. Therefore, we still have the Carleman estimates (\ref{carle-real}) in the punctured disc. Following the arguments  as we did to get $(\ref{con-n-2})$ for $ d_i \geq 1$, the conclusion $\sum\limits_{z_i\in \mathbb B_{1/5}} 1\leq C\hat{N} $ will  be arrived for $d_i=0$.
Rescaling back to $R_0$, the estimate (\ref{con-n-2}) yields that
\begin{align}
H^{0}(\{\mathcal{C}\cap \mathbb B_{\frac{R_0}{25}}\}   )\leq C  \hat{N}.
\end{align}
It follows from (\ref{dou-com-c}) that $\hat{N}\leq C\hat{\mathcal{N}}(\frac{3R_0}{2})$ for some large $\hat{N}$.
This completes the proof of the theorem.
\end{proof}

The rest of the section is devoted to the discussion of upper bound of critical points with large coefficients in the equations in the plane. To study the local growth of gradient near each point, we introduce
$$\mathcal{\hat{N}}(x, r)=\frac{ r\int_{\mathbb B_r(x)} |\nabla u|^2} {\int_{\partial \mathbb B_r(x)} ( u-u(x))^2}, $$
where $\mathbb B_r(x)$ is the ball centered at $x$ with radius $r$. It is known that
\begin{align}
\mathcal{\hat{N}}(x, r)\leq C \mathcal{\hat{N}}(2R_0)
\label{shift-mono}
\end{align}
for $x\in \mathbb B_{\frac{R_0}{4}}$ and $0<r\leq \frac{3R_0}{2}$, see e.g. \cite{NV17}.
The arguments in Theorem \ref{th2} can be applied to study the upper bound of critical points for elliptic equations with a large drift term. We consider the elliptic equations
\begin{align}
{\rm div}( A(x)\nabla u)+ \lambda b(x)\cdot \nabla u =0 \quad \quad \mbox{in}  \ \mathbb B_5,
\label{drift}
\end{align}
where $A(x)=(a_{ij}(x))_{2\times 2}$ satisfies the assumptions (\ref{ellip}) and (\ref{lipschitz}), $b(x)$ satisfies the condition (\ref{bboud}), and possibly $\lambda \to \infty$. Let $\tilde{\mathcal{N}}=\hat{\mathcal{N}}(2R_0)$. If $\lambda\leq C\tilde{\mathcal{N}}$, we can perform the same argument in Theorem \ref{th2} directly. Thus, we will obtain the upper bound
\begin{align}
H^{0}(\{\mathcal{C}\cap \mathbb B_\frac{R_0}{25}\}   )\leq C\tilde{\mathcal{N}}.
\end{align}
 If $\lambda\geq C\tilde{\mathcal{N}}$, we first do some rescaling arguments. Let $v(x)=u(\frac{\tilde{\mathcal{N}}}{\lambda}x+x_0)$ for $x_0\in \mathbb B_{\frac{R_0}{4}}$. We consider the critical sets of $u$ in $\mathbb B_{\frac{5\tilde{\mathcal{N}}}{\lambda}}(x_0)$. Thus, $v(x)$ satisfies the equation
 \begin{align*}
 {\rm div}( \bar {A}(x)\nabla v)+ \tilde{\mathcal{N}} \bar{b}(x)\cdot \nabla v=0 \quad \quad \mbox{in}  \ \mathbb B_5,
 \end{align*}
where $\bar {A}(x)=(\bar{a}_{ij}(x))_{2\times 2}=(a_{ij}(\frac{\tilde{\mathcal{N}}}{\lambda}x))_{2\times 2}$ and $\bar{b}(x)=b(\frac{\tilde{\mathcal{N}}}{\lambda}x)$.
By the arguments in the proof of Theorem \ref{th2}, we can show that
\begin{align}
H^{0}(\{\mathbb B_\frac{R_0}{25}|\ |\nabla v(x)|=0 \}   )\leq C \hat{N}(\nabla v, \frac{R_0}{2}).
\end{align}
From (\ref{shift-mono}) and (\ref{dou-com-c}), it holds that
\begin{align*}
\hat{N}(\nabla v, \frac{R_0}{2})\leq \hat{\mathcal{N}}(x_0, \frac{3R_0 \tilde{\mathcal{N}}}{2\lambda})\leq C \tilde{\mathcal{N}}.
\end{align*}
Thus, we have
\begin{align}
H^{0}(\{\mathbb B_\frac{\tilde{\mathcal{N}}R_0}{25\lambda}(x_0) |\ |\nabla u(x)|=0 \}   )\leq C\tilde{\mathcal{N}}.
\end{align}
Covering the ball $\mathbb B_\frac{R_0}{25}$ with $C\frac{\lambda^2}{\tilde{\mathcal{N}}^2}$ number of $\mathbb B_{\frac{\tilde{\mathcal{N}}R_0}{25\lambda}}(x_0) $ balls for $x_0\in \mathbb B_\frac{R_0}{25}$, we obtain that
\begin{align}
H^{0}(\{\mathbb B_\frac{R_0}{25}|\ |\nabla u(x)|=0 \}   )\leq C\frac{\lambda^2} {\tilde{\mathcal{N}}}.
\end{align}
For the upper bound of singular points with a large first order term or zero order term, see \cite{DF90}, \cite{Z16}.

\end{document}